\newcommand*{\Ab}{{\mathbb A}}
\newcommand*{\Rb}{{\mathbb R}}
\newcommand*{\la}{{\langle}}
\newcommand*{\ra}{{\rangle}}
\newcommand*{\R}{{\mathbb R}}
\begin{document}
\doi{10.1080/1744250YYxxxxxxxx}
 \issn{1744-2516}
\issnp{1744-2508}
\jvol{00} \jnum{00} \jyear{YYYY} \jmonth{Month}
\markboth{Curry, Ebrahimi--Fard, 
Malham  and Wiese}{L{\'e}vy processes and quasi-shuffle algebras}


\title{\itshape L{\'e}vy Processes and Quasi-Shuffle Algebras}
\author{Charles Curry$^{\rm a}$, Kurusch Ebrahimi--Fard$^{\rm b}$, 
Simon J.A.~Malham$^{\rm a}$  and 
Anke Wiese$^{\rm a}$$^{\ast}$\thanks{$^\ast$Corresponding author. 
Email: A.Wiese@hw.ac.uk}\thanks{Acknowledgements. 
KEF, SJAM and AW  would like to thank the 
Edinburgh Mathematical
Society for support for a visit by KEF to Heriot--Watt in 
July 2012.}\vspace{6pt}
$^{\rm a}${\em Maxwell Institute for Mathematical Sciences
and School of Mathematical and Computer Sciences,
Heriot-Watt University, Edinburgh EH14 4AS, UK}; 
$^{\rm b}${\em Instituto de Ciencias Matem\'aticas, 
Consejo Superior de Investigaciones Cient\'{i}ficas, 
C/ Nicol\'as Cabrera, no. 13-15, 28049 Madrid, Spain}
 \\\vspace{6pt}\received{\rm{(}7 October 2013\rm{)}\\
\vspace{6pt}{\bf Accepted for publication in {\em Stochastics: An International Journal of Probability and Stochastic Processes}}}}

\maketitle

\begin{abstract}
We investigate the algebra of repeated integrals of semimartingales. 
We prove that
a {\em minimal family} of semimartingales generates a 
 quasi-shuffle algebra. 
In essence, to fulfill the minimality criterion, first, the family must be 
a minimal generator of the algebra of repeated integrals 
generated by its elements and by
quadratic covariation processes recursively constructed from the
elements of the family.
Second,
recursively constructed quadratic covariation processes may lie in 
the linear
span  of 
previously constructed ones and of the family,
but may  not lie in the linear span of repeated integrals of these.
We prove that a finite family of independent
L\'evy processes that have finite moments 
generates a minimal family. Key to the
proof are the
Teugels martingales and a strong
orthogonalization of them. We conclude that a finite family
of independent L\'evy processes form a quasi-shuffle algebra.
We discuss important potential applications to constructing
efficient numerical methods for the strong approximation
of stochastic differential equations driven by L\'evy
processes.
\end{abstract}

\begin{keywords}{semimartingales, L\'evy processes, quasi-shuffle algebra,
Teugels martingales}
\end{keywords}
\begin{classcode} 60H30, 60G44
\end{classcode}

\section{Introduction}\label{intro}

The set of all multiple Stratonovich integrals constructed from 
independent Wiener processes
generates a shuffle algebra (see Gaines 1994). 
This is because the usual integration by
parts formula holds for such multiple integrals. The set of all multiple
It\^o integrals on the other hand generates a quasi-shuffle algebra. 
In this case the non-zero quadratic variation of the underlying 
Wiener processes is revealed by the It\^o 
integration by parts formula. As might
be expected, the two algebras are isomorphic (by direct application
of the results
by Hoffman 2000).  More generally,
 Li \& Liu 1997 studied algebraic bases for
independent Wiener and Poisson processes  and the set of multiple 
It\^o integrals constructed from them. 

Knowledge about the algebraic structure of stochastic systems has 
been proved to be very useful 
in a range of applications. Recent applications 
 include chaotic
representations of martingales (Jamshidian 2011), the generalization
 and study of the concept of a Fliess operator to 
input-output maps driven  by It{\^o} processes (Duffaut Espinosa, Gray 
\& Gonz{\'a}lez 2012), and the design and analysis of
efficient stochastic simulation methods for stochastic differential
equations driven by Wiener processes (Malham \& Wiese 2009 and 
Ebrahimi-Fard et al.~2012), among others.

It is natural to now ask the 
question of whether a family of Wiener--Poisson processes or 
more generally of L\'evy processes
generates a quasi-shuffle algebra.  Indeed what about a family
of semimartingales? In this paper we prove 
the following new main results, that a collection of:
\begin{enumerate}
\item[(1)] Semimartingales that generate
a \emph{minimal family} (see Section~\ref{minimal_family})
form a quasi-shuffle algebra;
\item[(2)] L\'evy processes that have finite moments 
generate a minimal family.
\end{enumerate}
A natural consequence is that the \emph{Hoffman exponential map} 
(see Hoffman 2000 and Hoffman \& Ihara 2012) establishes 
an isomorphism between the quasi-shuffle algebra of 
L\'evy processes and a shuffle algebra.
From a practical strong simulation perspective knowledge of the 
quasi-shuffle structure  is highly desirable.
This is because in principle, we can utilize the convolution shuffle algebra
analysis of Ebrahimi-Fard \emph{et al.~}(2012) to establish efficient
strong integrators for stochastic differential equations driven by 
L\'evy processes. L\'evy processes have become increasingly popular
in recent years and can now be regarded as 
one of the key ingredients for many models
in finance and economics and in insurance; their efficient 
simulation 
has thus become an important aspect of these applications.

Let us outline the key points more explicitly. 
Suppose we are given a finite family of semimartingales.
Without loss of generality we assume they are all zero at time $t=0$.
The real product of two semimartingales $X$ and $Y$ 
is given by
$XY=\int X_-\,\mathrm{d} Y+\int Y_-\,\mathrm{d} X+[X,Y]$.
Here $[X,Y]$ is the quadratic covariation of $X$ and $Y$
and represents the It\^o correction to the classical integration 
by parts formula; it is itself a semimartingale, 
and the space
of semimartingales with multiplication forms an algebra.
The formula above 
is reminiscent of a quasi-shuffle 
of $X$ and $Y$, see Section~\ref{quasi_shuffle}, 
and see Hoffman 2000, Ebrahimi--Fard \& Guo 2006, 
Novelli, Patras \& Thibon 2011, and Hoffman \& Ihara 2012 
for more details on this product.
Indeed, for a {\em minimal} family of semimartingales, we can assign a 
letter to each
element in the family, and inductively, new letters to those nested quadratic 
covariation processes that are new and not linear combinations of those 
hitherto 
constructed.
In so doing, we define an alphabet
$\Ab$, and we then establish an isomorphism 
from the quasi-shuffle algebra $\Rb\la\Ab\ra$ 
of noncommutative polynomials and
formal power series  generated by 
 words formed from $\Ab$
to the algebra of repeated integrals
generated  by the given semimartingales.

Our second main result is that a family of independent
\emph{L\'evy processes} with finite moments 
generates a minimal family. Key
to establishing
this result are Teugels martingales and a 
strong orthogonalization of them. 
Here we relied particularly on work by Nualart \& Schoutens (2000),
Davis (2005), 
 and Jamshidian (2005). We 
conclude a family of independent L\'evy processes generates
a quasi-shuffle algebra.

Our paper is structured as follows. 
We introduce our notion of minimal families
of semimartingales in Section~2.
In Section~3 we prove that a family of independent L\'evy processes 
generate a minimal family. We characterize those L\'evy processes 
for which the alphabet $\Ab$ is finite.  
With the concrete general example of independent L\'evy processes in hand,
we then establish in Section~4 the isomorphism from the quasi-shuffle
algebra to the algebra generated by
 a minimal family  of semimartingales. We then apply this result
to a family of independent L{\'e}vy processes and derive further algebraic
properties of the algebra generated by them.
Finally in Section~5 we conclude and 
discuss important applications of our results.

\section{Minimal families of semimartingales}\label{minimal_family}

Underlying our analysis is a complete filtered probability space 
$\big(\Omega, {\mathcal F}, \big({\mathcal F}_t\big)_{t\ge 0}, P)$
 satisfying what is known as the
{\em usual conditions} of
completeness and right-continuity, see Protter (1992; p.~3). 
Without loss of generality, we
assume that ${\mathcal F}_0$ is generated by the $P$-null sets.
Due to the usual conditions  every martingale has
a modification that has paths that are right-continuous with left limits 
(see Protter p.~5, Corollary 1). 
We assume  henceforth all martingales have this property.

A process $X$ is a {\em semimartingale}, if $X$ has a decomposition 
$X_t=X_0+M_t+A_t$
for $t\ge 0$,
where $M_0=A_0=0$,  
and where $M$ is a local martingale  and $A$ is an adapted process that is 
right-continuous
with left limits and 
has finite variation on each finite interval $[0, t]$.  
Recall that a process $A$ is {\em predictable}, if it is 
measurable with respect
to the $\sigma$-algebra on ${\mathbb R^+}\times \Omega$ generated by the
left-continuous processes. 
A semimartingale $X$ that admits a decomposition 
with a predictable 
finite variation process $A$ 
 is  a {\em special semimartingale}. Such a 
decomposition is unique, see Jacod \& Shiryaev (2002, Definition I.3.1  and
I.4.21).

The space of semimartingales 
with multiplication forms an algebra (Protter p.~60, 
Corollary~3). 
The  
 {\em quadratic covariation} or {\em square bracket process}
$[X, Y]$ 
between two semimartingales $X$ and $Y$ is defined via their product
as follows
\begin{eqnarray}\label{eq:covariation}
XY=X_0Y_0+\int X_-\, \mathrm{d}Y+\int Y_-\, \mathrm{d}X + [X,Y],
\end{eqnarray}
see Protter (1992; p.~58). The quadratic covariation 
of a process $X$ with itself is known as its {\em quadratic
variation}
(we refer to the
monographs by Protter (1992) and Jacod \& Shiryaev (2002) for details).
The following property, which is essential for the definition
of the quasi-shuffle product, follows
from 
I.4.49 Proposition and I.4.52 Theorem in Jacod \& Shiryaev (2002). 

\begin{remark} 
\label{variation_properties}
Let $X$, $Y$ and $Z$ be semimartingales, then:
 (a)~$[X, 0]  = 0$; (b)~$[X, Y]  = [Y, X]$;
(c) $[X, [Y, Z]]   
= \sum_s \Delta X_s \Delta Y_s \Delta Z_s  =  [[X, Y], Z]$.
Here $\Delta X_s:= X_s-\lim_{u\nearrow s} X_u$
denotes the jump of a semimartingale $X$ at time $s$.

Hence, the quadratic covariation defines a {\em commutative, 
associative product}  on the space of semimartingales.
\end{remark}

Let ${\mathcal X}=\{X^1, \, \ldots , \, X^d\}$ be a finite family of 
semimartingales. By considering $X^i-X^i_0$, we
may and will assume $X^i_0=0$ for all $i=1,\,\ldots,\, d$. Let 
${\mathcal A}$ denote the algebra of repeated integrals 
generated by ${\mathcal X}$.
Important elements of the algebra ${\mathcal A}$ are the multiple 
bracket processes (see Jamshidian 2005). 

\begin{definition}{\bf (Power Bracket)}
For a semimartingale $X$,  we denote $[X]^{(1)}=X$, and
 we
define the {\em power bracket}  for $n\ge 2$ by
$[X]^{(n)} = [X, [X]^{(n-1)}]$.
\end{definition}

Note for $n\ge 3$, the $n$-bracket is given by 
$[X]^{(n)} =\sum \big(\Delta X\bigr)^n$. This
is also known as the  {\em power jump process} (see Nualart \& Schoutens 2000).
Key to relating the semimartingale product to a quasi-shuffle product
is the following property.

\begin{definition}{\bf (Minimal family)} 
A family ${\mathcal X}$ of semimartingales
is {\em minimal} if it satisfies the following two conditions: 
\begin{enumerate}
\item[(A)] {\bf (Minimal Generator)} It is the minimal 
generator of the algebra ${\mathcal A}$ of repeated integrals generated by 
${\mathcal X}$ and by 
successively constructed nested covariation processes from ${\mathcal X}$.
\item[(B)] {\bf (Consistency)} 
Successively
constructed quadratic covariation processes may lie in 
the linear
span  of 
previously constructed ones and of the family,
but may  not lie in the linear span of repeated integrals of these.
\end{enumerate} 
\end{definition}

\begin{remark}\label{interpretation_minimal}
Property (B) is essential for the definition
 of the alphabet underlying the
quasi-shuffle algebra in Section~\ref{quasi_shuffle}. In order that
the multiplication of semimartingales defines a quasi-shuffle product, 
the nested covariation processes 
have to be assigned 
new letters in the alphabet except if they are in
the linear span of other
letters in the alphabet (that is if they are linear combinations of
previously constructed quadratic covariation processes and the 
semimartingales themselves).  Property (B) excludes families of
semimartingales for which 
quadratic covariation processes would  correspond to words, that is
multiple integrals,  
rather than letters in the quasi-shuffle algebra.  
\end{remark}

\begin{example}\label{example_minimal}  
We give several examples to illustrate the concept of
minimality.
\begin{enumerate}
\item[(a)] Suppose the family of semimartingales ${\mathcal X}$ is generated 
by independent Wiener processes $W^1,\, \ldots ,\, W^d$.  
Then $[W^i, W^j]_t =\delta_{ij}t $ and $[W^i]^{(n)}=0$ for
all $n\ge 3$. Thus ${\mathcal X}$ is minimal.
\item[(b)] Suppose the family of semimartingales ${\mathcal X}$ is generated 
by independent Poisson processes $P^1,\, \ldots ,\, P^d$. Then 
$[P^i, P^j] =\delta_{ij} P^i$, and $[P^i]^{(n)}=P^i$ for
all $n\ge 2$. Thus ${\mathcal X}$ is minimal.
\item[(c)] More generally, we will show in Section~\ref{levy_processes} 
 that independent L{\'e}vy processes with moments of all orders
generate a minimal family of semimartingales. 
\item[(d)] Let
$X^1_t =  W_t$ and
$X^2_t = \int_0^t W_s\,\mathrm{d}s$,
where $W$ is a Wiener process.
Then $[X^1, X^1]_t = t$, and $X^2_t=\int_0^t X^1_s\mathrm{d}[X^1,X^1]_s$.  
Hence $\{X^1, X^2\}$ is {\em not} minimal. 
However, the family generated by $\{X^1\}$ only is minimal.
\end{enumerate}
\end{example}

\section{Minimal families of L{\'e}vy processes}\label{levy_processes}
We consider the  case of $d$ 
independent L{\'e}vy processes $X^1, \ldots ,\, X^d$. 
We will assume here and in the sequel that all $X^i$ have 
moments of all
orders.  Without
loss of generality, assume that all $X^i$ are stochastic. 
The goal of this section is to show that 
${\mathcal X}=\{X^1, \ldots ,\, X^d\}$ is a minimal family. 
If one of the processes 
$X^i$ is continuous, the process $t$ will be generated by 
$[X^i]^{(2)}$. Otherwise, if none
of the $X^i$ is continuous, then we augment $\{X^1, \ldots ,\, X^d\}$ with $t$,
and
$\{t,\, X^1, \ldots ,\, X^d\}$ will be minimal.

Recall that a  L{\'e}vy process is zero
at time 0, has independent stationary increments and is continuous 
in probability. 
Since the processes $X^i$ are independent, condition (A) in the definition
of a minimal family is satisfied. To show that condition (B) is fulfilled, 
we assume first that $d=1$, and write $X=X^1$. 
The L{\'e}vy process $X$ can be characterised by  the 
triplet $(\alpha,\, \sigma^2,\, \nu)$, where $\alpha $ 
and $\sigma$ are constants and where $\nu$ is  a measure on $\mathbb R$ 
with $\nu(0)=0$ satisfying 
$\int_{\mathbb R} \inf(1, x^2)\, \nu(\mathrm{d}x)<\infty$. 
The L{\'e}vy Decomposition Theorem (see e.g.~ Protter 1992 Theorem I.42)
states that the process 
$X$ has a unique decomposition
\begin{equation*}
X_t=\alpha t +\sigma W_t + J_t,
\end{equation*}
where $W$ is a Wiener process and $J$ is a purely discontinuous martingale.
Note in particular that L{\'e}vy processes are special semimartingales. 
In terms of the L{\'e}vy measure
$\nu$, we can express $J$ in the form
\begin{equation*}
J_t = \int_0^t \int_{\mathbb R} x\,
\big(Q(\mathrm{d}s, \mathrm{d}x)-\,\mathrm{d}t\,\nu(\mathrm{d}x)\big),
\end{equation*}
where $Q(\mathrm{d}t, \mathrm{d}x)$ is a random Poisson measure with intensity measure 
$\mathrm{d}t\times\nu(\mathrm{d}x)$. 
Theorem 1.4.52 in Jacod \& Shiryaev (2002) and 
Theorem I.36 in Protter (1992)   
imply that
the quadratic variation and power brackets of $X$ are given by
\begin{align*}
[X]^{(n)}_t & = \, \sigma^2 1_{\{n=2\}} t+ \sum_{0\le s \le t}\big(\Delta J_s\big)^n
 = \, \sigma^2 1_{\{n=2\}} t
+\int_0^t \int_{\mathbb R} x^n \,Q(\mathrm{d}s, \mathrm{d}x)
\end{align*}
for $n\ge 2$. 
Note that the power bracket is again a L{\'e}vy process,  
and that for $n\ge 2$
$E\bigl[[J]^{(n)}_t\bigr] = t \int_{\mathbb R} x^n\, \nu(\mathrm{d}x)< \infty$,
see Nualart \& Schoutens (2000, p.~111). 
For $n\ge 2$, set 
$\alpha_n = \int_{\mathbb R} x^n\, \nu(\mathrm{d}x)$,
and denote the compensated power jump processes by
\begin{align*}
Y^{(1)}_t & = \, [X]^{(1)}_t-\alpha t\qquad\text{and}\qquad
Y^{(n)}_t  = \, [J]^{(n)}_t-\alpha_n t.
\end{align*}
The processes $Y^{(n)}$, $n\ge 1$, are known as the 
{\em Teugels martingales}, see Nualart \& Schoutens (2000). 
The power bracket  has thus the unique decomposition
$[X]^{(n)}_t =\, (\sigma^21_{\{n=2\}} + \alpha_n )t + Y^{(n)}_t$ 
as the sum of the purely discontinuous martingale $Y^{(n)}$ and the 
deterministic process $(\sigma^21_{\{n=2\}} + \alpha_n) t$. 

Two locally  square-integrable martingales are 
{\em strongly orthogonal}, if their (real) product is a local martingale 
(see Jacod \& Shiryaev 2002, I.4.11 Definition). 
Since $X$ is assumed to have moments of all orders, 
all compensated power processes $Y^{(n)}$ are square-integrable 
martingales. Hence one can find pairwise strongly orthogonal square-integrable 
martingales
$H^{(i)}, i\ge 1$, 
such that 
\begin{equation}\label{eq:strongly_orthogonal}
Y^{(n)} = c_{n,1}H^{(1)} + c_{n,2}H^{(2)} + \ldots + H^{(n)}
\end{equation}
for $n\ge 1$, where $c_{n,i}$ are constants with $c_{n,n}=1$, see 
Nualart \& Schoutens (2000). 

\begin{remark}\label{rm:orthogonality}
A standard procedure to construct the 
strongly orthogonal martingales $H^{(n)}$ 
is as follows (see Davis \& Varaiya 1974, Davis 2005,
Nualart \& Schoutens 2000  and Jamshidian 2005). 
For a locally square-integrable martingale $M$, there exists a unique 
predictable increasing process $\la M, M\ra$, the 
{\em sharp or angular bracket} of $M$, 
such that $M^2-\la M, M\ra $ 
is a local martingale. 
By direct calculation,
the sharp bracket of the Teugels 
martingales $Y^{(i)}$ and $Y^{(j)}$ is given by 
$\la Y^{(i)},\, Y^{(j)}\ra_t =(\alpha_{i+j} +\sigma^21_{\{i=j=1\}})\cdot t$,
see equation (1.7) in Davis (2005). Define inductively 
$H^{(1)}:= \, Y^{(1)}$ and
$H^{(n)}  := Y^{(n)} - \sum_{k=1}^{n-1} \int 
\frac{\mathrm{d}\la Y^{(n)},\, H^{(k)}\ra}{\mathrm{d}\la H^{(k)},\, H^{(k)}\ra}
\, \mathrm{d}H^{(k)}$
for $n \ge 2$. 
Importantly,
the sharp brackets $\la H^{(k)},\, H^{(k)}\ra_t$ are scalar multiples of $t$. 
It follows
 for a square-integrable process $\varphi$ we have
$\| \int_0^t\varphi_s\, \mathrm{d}H^{(n)}_s \|_{L^2(P)} = 0$ for all $t\ge 0$
 if and only if $\varphi\equiv 0$. 
A standard localisation procedure now implies 
the following  identity, which is essential for the 
proof of Theorem~\ref{th:minimality}. 
Let $n\ge 1$, and assume that $H^{(1)}, \ldots ,\, H^{(n)} \not\equiv 0$. If
$\varphi^i$, $i=1, \ldots , n$, are left-continuous processes with
$\sum_{i=1}^n\int\varphi^i_s\, \mathrm{d}H^{(i)}_s \equiv 0$, 
then $\varphi^i\equiv 0$ 
for all $i=1, \ldots , n$. 
We remark that in essence the equality 
$\sum_{i=1}^n\int\varphi^i_s\, \mathrm{d}H^{(i)}_s \equiv 0$
describes
 the Galtchouk-Kunita-Watanabe decomposition
of the martingal that is identically zero  into the sum
of $n$ orthogonal locally square-integrable stochastic integrals
$\int\varphi^i_s\, \mathrm{d}H^{(i)}_s$. This decomposition
is known to be
unique, 
see the remark on page 127 following Theor{\`e}me 4.27 in Jacod (1979).
\end{remark}

\begin{lemma}\label{linear_span}
Let $k\ge 1$. Consider the following properties:
\begin{enumerate}
\item[(a)] $H^{(k)}\equiv 0$.
\item[(b)] $Y^{(k)}$ is in the linear span of $\{Y^{(1)}, \ldots ,\, Y^{(k-1)}\}$.
\item[(c)] $[X]^{(k)}$ is in the linear span of 
$\{t, [X]^{(1)}, \ldots ,\, [X]^{(k-1)}\}$.  
\item[(d)] $[X]^{(n)}$ is in the linear span of
$\{t, [X]^{(1)}, \ldots ,\, [X]^{(k-1)}\}$
for all $n\ge k$.
\item[(e)] $Y^{(n)}$ is in the linear span of $\{Y^{(1)}, \ldots ,\, Y^{(k-1)}\}$ 
for all $n\ge k$.
\item[(f)] $H^{(n)}\equiv 0$ for all $n\ge k$.
\end{enumerate}
Then (a) implies  properties (b) to (f). Furthermore
(a), (b) and (c) are equivalent, and (d), (e)
 and (f) are equivalent. 
\end{lemma}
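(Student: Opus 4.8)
The plan is to rest everything on two structural facts drawn from (\ref{eq:strongly_orthogonal}) and Remark~\ref{rm:orthogonality}. The change of basis (\ref{eq:strongly_orthogonal}) is unitriangular, so $\mathrm{span}\{Y^{(1)},\dots,Y^{(m)}\}=\mathrm{span}\{H^{(1)},\dots,H^{(m)}\}$ for every $m$; and the $H^{(i)}$ are pairwise strongly orthogonal, so $\langle H^{(i)},H^{(j)}\rangle\equiv0$ for $i\neq j$ while $\langle H^{(j)},H^{(j)}\rangle$ is a nonzero multiple of $t$ exactly when $H^{(j)}\not\equiv0$. I will combine these with the Hankel form $\langle Y^{(i)},Y^{(j)}\rangle_t=(\alpha_{i+j}+\sigma^21_{\{i=j=1\}})\,t$ and with the decomposition $[X]^{(n)}=\beta_nt+Y^{(n)}$, where $\beta_n$ is deterministic.

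First I would settle the ``static'' block (a)$\Leftrightarrow$(b)$\Leftrightarrow$(c). For (a)$\Rightarrow$(b), $H^{(k)}\equiv0$ turns (\ref{eq:strongly_orthogonal}) into $Y^{(k)}\in\mathrm{span}\{H^{(1)},\dots,H^{(k-1)}\}=\mathrm{span}\{Y^{(1)},\dots,Y^{(k-1)}\}$. For (b)$\Rightarrow$(a), expressing $Y^{(k)}$ in the $H$-basis in two ways and subtracting writes $H^{(k)}$ as a combination of $H^{(1)},\dots,H^{(k-1)}$; pairing with each $H^{(j)}$, $j<k$, via the sharp bracket and using orthogonality forces the coefficient of every nonvanishing $H^{(j)}$ to be zero, so $H^{(k)}\equiv0$. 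For (b)$\Leftrightarrow$(c), the deterministic shifts give $\mathrm{span}\{t,[X]^{(1)},\dots,[X]^{(k-1)}\}=\mathrm{span}\{t,Y^{(1)},\dots,Y^{(k-1)}\}$, so a relation in (c) reads $Y^{(k)}=\gamma t+\sum_{j<k}b_jY^{(j)}$; taking expectations (the $Y^{(j)}$ are mean-zero square-integrable martingales) kills the drift, $\gamma=0$, which is (b), and the reverse inclusion is immediate.

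The crux is the bridge (a)$\Rightarrow$(e). From (b) I write $Y^{(k)}=\sum_{j<k}d_jY^{(j)}$ and compute the sharp bracket of $Z:=Y^{(k)}-\sum_{j<k}d_jY^{(j)}$ with itself: by the Hankel form it equals $t\big(\int_{\mathbb R}(x^k-\sum_{j<k}d_jx^j)^2\,\nu(\mathrm{d}x)+\sigma^2d_1^2\big)$. Since $Z\equiv0$ this vanishes, yielding simultaneously the $\nu$-almost-everywhere identity $x^k=\sum_{j<k}d_jx^j$ and, when $\sigma\neq0$, the constraint $d_1=0$. An elementary induction then expresses every higher power, $x^n\in\mathrm{span}\{x,\dots,x^{k-1}\}$ in $L^2(\nu)$ for all $n\ge k$, remaining inside $\mathrm{span}\{x^2,\dots,x^{k-1}\}$ when $\sigma\neq0$. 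Writing $Y^{(n)}=\int_0^\cdot\!\int_{\mathbb R}x^n\,(Q(\mathrm{d}s,\mathrm{d}x)-\mathrm{d}s\,\nu(\mathrm{d}x))$ for $n\ge2$ and $Y^{(1)}=\sigma W+J$, the same vanishing-bracket computation applied to $Y^{(n)}-\sum_{j<k}e_{n,j}Y^{(j)}$ then gives $Y^{(n)}=\sum_{j<k}e_{n,j}Y^{(j)}$ for all $n\ge k$, which is (e); here the fact that $e_{n,1}=0$ whenever $\sigma\neq0$ is exactly what makes the Wiener contribution $\sigma^2e_{n,1}^2$ to the bracket disappear.

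Finally I would close the second block. For (e)$\Rightarrow$(f) I induct on $n\ge k$: assuming $H^{(k)}\equiv\cdots\equiv H^{(n-1)}\equiv0$, relation (e) and (\ref{eq:strongly_orthogonal}) place $H^{(n)}$ in $\mathrm{span}\{H^{(1)},\dots,H^{(k-1)}\}$, and the orthogonality pairing forces $H^{(n)}\equiv0$ (the base case $n=k$ being (b)$\Rightarrow$(a)); (f)$\Rightarrow$(e) is immediate from (\ref{eq:strongly_orthogonal}), and (d)$\Leftrightarrow$(e) repeats the drift-removal argument uniformly in $n\ge k$. Combining the blocks (a)$\Leftrightarrow$(b)$\Leftrightarrow$(c) and (d)$\Leftrightarrow$(e)$\Leftrightarrow$(f) with the bridge (a)$\Rightarrow$(e) delivers (b)--(f) from (a). I expect the genuine obstacle to be the bridge: upward propagation of a single linear dependence is false for general martingales and relies on the moment structure $\langle Y^{(i)},Y^{(j)}\rangle\propto\alpha_{i+j}$, and the Wiener component $\sigma W$ must be tracked carefully, since it is precisely what forbids the $x^1$ term in the reduction when $\sigma\neq0$.
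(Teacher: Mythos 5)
Your proof is correct, but its central mechanism is genuinely different from the paper's. The paper treats the two equivalence blocks exactly as you do (it dismisses them as following directly from the definitions and the triangular relation between the $[X]^{(n)}$, $Y^{(n)}$ and $H^{(n)}$), but it bridges the blocks by proving (c) $\Rightarrow$ (d) purely algebraically: writing $[X]^{(k)}=c_0t+\sum_{j<k}c_j[X]^{(j)}$ and applying $[X,\,\cdot\,]$, bilinearity of the bracket together with $[X,t]=0$ gives $[X]^{(k+1)}=\sum_{j<k}c_j[X]^{(j+1)}\in\mathrm{span}\{t,[X]^{(1)},\ldots,[X]^{(k)}\}=\mathrm{span}\{t,[X]^{(1)},\ldots,[X]^{(k-1)}\}$, and induction finishes. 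Your bridge instead goes (b) $\Rightarrow$ (e) through the $L^2$ structure: the vanishing sharp bracket of $Y^{(k)}-\sum_{j<k}d_jY^{(j)}$ forces the $\nu$-a.e.\ polynomial identity $x^k=\sum_{j<k}d_jx^j$ (plus $d_1=0$ when $\sigma\neq 0$), which you propagate upward by multiplying by powers of $x$ and then convert back into martingale identities via the same vanishing-bracket criterion. Both routes are complete, but note that your closing diagnosis is off: upward propagation is \emph{not} false for general processes and does \emph{not} rely on the moment structure --- the paper's bracket-algebra argument works for arbitrary semimartingales, using only bilinearity and associativity of the quadratic covariation, which is precisely the content of Remark~\ref{linearspan_general}. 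What your route buys instead is finer quantitative information: it identifies the linear dependence with a polynomial vanishing $\nu$-a.e., which is exactly the mechanism behind Proposition~\ref{compound_Poisson} (finite support of the L\'evy measure), and it isolates the role of the Wiener component through the $\sigma^2d_1^2$ term. The cost is that it needs the L\'evy and finite-moment hypotheses (the Hankel form $\langle Y^{(i)},Y^{(j)}\rangle_t=(\alpha_{i+j}+\sigma^21_{\{i=j=1\}})\,t$), which the paper's shorter argument does not.
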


\begin{proof}
The equivalence of (a), (b) and (c) and the equivalence of 
(d), (e) and (f)  follow directly from 
the definitions and relations between the power bracket processes, the Teugels
martingales and the orthogonal basis. 
We will show that (c) implies (d). 
Suppose that $[X]^{(k)}$ is in the linear span of 
$\{t, [X]^{(1)}, \ldots ,\, [X]^{(k-1)}\}$.
By definition we have
$[X]^{(k+1)}=[X, [X]^{(k)}]$, and hence  $[X]^{(k+1)}$ 
is in the linear span of
$\{t, [X]^{(1)}, \ldots ,\, [X]^{(k)}\}$,
 which by assumption on $[X]^{(k)}$ coincides with the linear span of 
$\{t, [X]^{(1)}, \ldots ,\, [X]^{(k-1)}\}$.
Hence,  we conclude inductively that 
$[X]^{(n)}$ is in the linear span of
$\{t, [X]^{(1)}, \ldots ,\, [X]^{(k-1)}\}$
for all $n\ge k$. 
\end{proof}

\begin{remark}\label{linearspan_general}
The implication from  (c) to (d) 
holds for general semimartingales. 
\end{remark}

Property (B) for the minimality of the family $\{X\}$ 
follows from the 
following stronger key property of the power brackets of $X$.

\begin{theorem} \label{th:minimality} 
Let $n\ge 1$, and
assume  there are left-continuous processes 
$\varphi^k$, $k=0,  \ldots , n-1$, 
such that
\begin{eqnarray} \label{eq:representation}
[X]^{(n)} = 
\sum_{k=1}^{n-1} \int \varphi^k_s\, \mathrm{d}[X]^{(k)}_s 
+\int \varphi^0_s\, \mathrm{d}s.
\end{eqnarray}
Then $\varphi^k$ is constant for all $k=0, \ldots ,\, n-1$. 
\end{theorem}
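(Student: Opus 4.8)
The plan is to exploit the canonical decomposition of each power bracket into a purely discontinuous martingale plus a deterministic drift, and then to read off the $\varphi^k$ from the uniqueness of the Galtchouk--Kunita--Watanabe decomposition recorded in Remark~\ref{rm:orthogonality}. First I would substitute, for each $k$, the decomposition $[X]^{(k)}_t=\beta_k t+Y^{(k)}_t$ (with $\beta_k=\sigma^21_{\{k=2\}}+\alpha_k$ for $k\ge 2$ and $\beta_1=\alpha$) into \eqref{eq:representation}. Splitting $\int\varphi^k_s\,\mathrm{d}[X]^{(k)}_s=\int\varphi^k_s\beta_k\,\mathrm{d}s+\int\varphi^k_s\,\mathrm{d}Y^{(k)}_s$ turns \eqref{eq:representation} into an identity between two special semimartingales: on the left $Y^{(n)}+\beta_n t$, on the right $\sum_{k=1}^{n-1}\int\varphi^k_s\,\mathrm{d}Y^{(k)}_s$ together with the continuous finite-variation process $\int(\varphi^0_s+\sum_{k=1}^{n-1}\beta_k\varphi^k_s)\,\mathrm{d}s$. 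Since the $\varphi^k$ are left-continuous, hence locally bounded and predictable, the stochastic integrals are local martingales and the drift term is predictable of finite variation; uniqueness of the canonical decomposition (Jacod \& Shiryaev 2002, I.3.1) then lets me equate the martingale parts and the drift parts separately.

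For the martingale identity $Y^{(n)}=\sum_{k=1}^{n-1}\int\varphi^k_s\,\mathrm{d}Y^{(k)}_s$ I would pass to the strongly orthogonal basis via \eqref{eq:strongly_orthogonal}, writing $Y^{(k)}=\sum_{i=1}^{k}c_{k,i}H^{(i)}$ with $c_{k,k}=1$. The right-hand side then lies in the span of stochastic integrals against $H^{(1)},\ldots,H^{(n-1)}$ only, whereas the left-hand side $Y^{(n)}$ carries the component $c_{n,n}H^{(n)}=H^{(n)}$. If $H^{(n)}\not\equiv 0$ then, by Lemma~\ref{linear_span}, $H^{(1)},\ldots,H^{(n)}$ are all non-zero, and the key uniqueness property of Remark~\ref{rm:orthogonality} applied to the rearranged identity forces $c_{n,n}=0$, a contradiction. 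Hence $H^{(n)}\equiv 0$, and in the relevant situation in which $\{t,[X]^{(1)},\ldots,[X]^{(n-1)}\}$ are independent generators the martingales $H^{(1)},\ldots,H^{(n-1)}$ are non-zero.

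With the $H^{(n)}$ term removed, collecting the integrands of each $\mathrm{d}H^{(i)}$ and invoking the same uniqueness property for $i=1,\ldots,n-1$ yields the pointwise identities $c_{n,i}=\sum_{k=i}^{n-1}c_{k,i}\varphi^k_s$. This is a triangular linear system for $\varphi^1,\ldots,\varphi^{n-1}$ with unit diagonal ($c_{k,k}=1$): the equation for $i=n-1$ gives $\varphi^{n-1}_s=c_{n,n-1}$, a constant, and downward induction on $i$ then expresses each $\varphi^k_s$ as a fixed combination of constants and of already-determined constant processes, so that every $\varphi^k$, $k=1,\ldots,n-1$, is constant; left-continuity upgrades these almost-everywhere equalities to identities everywhere. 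Finally the drift identity gives $\beta_n=\varphi^0_s+\sum_{k=1}^{n-1}\beta_k\varphi^k_s$ for almost every $s$, and since the $\varphi^k$ with $k\ge 1$ are now constant, $\varphi^0$ is constant as well.

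The main obstacle I anticipate is the rigorous separation of \eqref{eq:representation} into martingale and finite-variation parts: one must check that each $\int\varphi^k_s\,\mathrm{d}Y^{(k)}_s$ is genuinely a (local) martingale and that the canonical decomposition is unique, which is where square-integrability of the Teugels martingales (from the moment assumption) and a localisation argument enter. The second delicate point is the clean application of the Galtchouk--Kunita--Watanabe uniqueness, which is stated only for non-vanishing $H^{(i)}$; one must therefore first dispose of the $H^{(n)}$ component as above and confirm that the surviving $H^{(1)},\ldots,H^{(n-1)}$ are non-zero before the triangular back-substitution can proceed. The triangularity itself, inherited from $c_{n,n}=1$ in \eqref{eq:strongly_orthogonal}, is exactly what propagates constancy from the top index downward.
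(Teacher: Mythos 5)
Your proposal follows the paper's own route essentially step for step: compensate the power brackets, use uniqueness of the canonical (special semimartingale) decomposition to split \eqref{eq:representation} into a drift identity and a martingale identity, transfer the martingale identity to the strongly orthogonal family $H^{(i)}$ via \eqref{eq:strongly_orthogonal}, invoke the uniqueness property of Remark~\ref{rm:orthogonality}, and solve the resulting unitriangular system $\sum_{k=i}^{n-1}c_{ki}\varphi^k=c_{ni}$ downward from $i=n-1$, after which the drift identity makes $\varphi^0$ constant. Your explicit disposal of the $H^{(n)}$ component (if $H^{(n)}\not\equiv 0$, then Lemma~\ref{linear_span} gives $H^{(1)},\ldots,H^{(n)}\not\equiv 0$, and the uniqueness property forces the coefficient $c_{nn}=1$ to vanish, a contradiction) is a cleaner articulation of a step the paper leaves implicit.

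There is, however, a genuine gap in your treatment of the case where some $H^{(i)}\equiv 0$ for $1\le i\le n-1$. The clause ``in the relevant situation in which $\{t,[X]^{(1)},\ldots,[X]^{(n-1)}\}$ are independent generators'' is not a hypothesis of the theorem, and the non-vanishing you defer to a later check cannot in fact be confirmed: it can fail. For a compensated Poisson process $X_t=P_t-t$ one has $Y^{(k)}=P-t$ for every $k\ge 1$, hence $H^{(2)}\equiv 0$; and for $n=3$, since $[X]^{(2)}=[X]^{(1)}+t$ and $[X]^{(3)}=[X]^{(2)}=P$, the choices $\varphi^2_s=1-f(s)$, $\varphi^1_s=f(s)$, $\varphi^0_s=f(s)$ satisfy \eqref{eq:representation} for \emph{any} left-continuous $f$, constant or not. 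So in the degenerate case the triangular back-substitution is simply unavailable, and no argument can produce constancy of the originally given integrands. The paper's opening move is precisely the repair: with $k_0=\min\{k\ge 1: H^{(k)}\equiv 0\}$, Lemma~\ref{linear_span} is used to replace \eqref{eq:representation} by an equivalent representation in terms of $t$ and $[X]^{(1)},\ldots,[X]^{(k_0)}$ alone, with new integrands $\psi^k$, and the orthogonality argument is then run for those integrands; the conclusion is to be read through this reduction, which is exactly what the minimality property (B) requires, namely that $[X]^{(n)}$ lies in the linear span of $\{t,[X]^{(1)},\ldots,[X]^{(n-1)}\}$ rather than being a genuine combination of repeated integrals. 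Your proof needs this reduction step, or an equivalent reformulation of what is to be proved in the degenerate case, in order to be complete.
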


\begin{proof}
We can assume without loss of generality 
that $H^{(1)},\,\ldots , \, H^{(n-1)}\neq 0$. If $H^{(k)}\equiv 0$  for
some $k\ge 1$, then 
$[X]^{(j)}$ is in the linear
span of $\{t, [X]^{(1)}, \ldots , [X]^{(k-1)}\}$ for all $j\ge k$ 
by Lemma~\ref{linear_span},  and   
equation~\eqref{eq:representation} is equivalent to
$[X]^{(n)} = 
\sum_{k=1}^{k_0} \int \psi^k_s\, \mathrm{d}[X]^{(k)}_s 
+\int \psi^0_s\, \mathrm{d}s$,
where  
$\psi^k$  are suitably defined left-continuous processes, 
and where $k_0=\min\{k\ge 1: H^{(k)}\equiv 0\}$.
By compensating the power brackets, equation \eqref{eq:representation}
 is equivalent to 
\begin{equation*}
Y^{(n)}_t +\alpha_n t =   
\sum_{k=1}^{n-1} \int_0^t \varphi^k_s\, \mathrm{d}Y^{(k)}_s + 
\sum_{k=2}^{n-1} \int_0^t \varphi^k_s\big(\alpha_k+\sigma^2 1_{\{k=2\}}\big)\,
\mathrm{d}s
+\int_0^t \big(\varphi^1_s \alpha+\varphi^0_s\big)\, \mathrm{d}s.
\end{equation*}
The uniqueness of the decomposition of the 
stochastic integral into predictable finite variation process and
local martingale
yields that for all $t\ge 0$  
\begin{eqnarray}\label{eq:rep_dt}
\alpha_n   & = & \, \varphi^1_t\alpha + \varphi^0_t +
\sum_{k=2}^{n-1}  \varphi^k_t\big(\alpha_k +\sigma^2 1_{\{k=2\}}\big)
\end{eqnarray}
and
\begin{align}
Y^{(n)}_t & =  \,
\sum_{k=1}^{n-1} \int_0^t \varphi^k_s\, \mathrm{d}Y^{(k)}_s. 
\end{align}
The latter equation is equivalent to
$\sum_{i=1}^{n} c_{ni}\, H^{(i)}
 = \sum_{k=1}^{n-1} \sum_{i=1}^{k}\int \varphi^k_s c_{ki}\, \mathrm{d}H^{(i)}_s$,
with $c_{nn}=1$, and after rearrangement
$H^{(n)}-\sum_{i=1}^{n-1} \int
\sum_{k=i}^{n-1} \varphi^k_s c_{ki}-c_{ni}\, \mathrm{d}H^{(i)}_s
= 0$.
Since  $H^{(i)}\not\equiv 0$, 
we have by 
Remark~\ref{rm:orthogonality} that
$\sum_{k=i}^{n-1}\varphi^k c_{ki}-c_{ni} \equiv 0$  for all $i=1,\,
\ldots , \, n-1.$ 
Recursively, it follows that 
$\varphi^k$ is  constant for all $k=n-1,\,\ldots , \, 1$.
Equation \eqref{eq:rep_dt} now implies that $\varphi^0$ is constant.
Hence the assertion follows. 
\end{proof}

\begin{corollary}
The family $\{X\}$ is minimal. 
\end{corollary}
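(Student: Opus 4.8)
The plan is to verify the two defining conditions (A) and (B) of a minimal family for the single L\'evy process $X$, drawing the decisive input from Theorem~\ref{th:minimality}. Throughout one adjoins the deterministic process $t$ exactly in the discontinuous case; in the continuous case $t$ is already supplied as $\sigma^{-2}[X]^{(2)}$ (here $\sigma\neq 0$, since an $X$ with $\sigma=0$ and no jumps would be deterministic, contrary to $X$ being stochastic). The first step is to identify all the successively constructed nested covariation processes. Starting from $[X]^{(1)}=X$ one has $[X,X]=[X]^{(2)}$ and inductively $[X,[X]^{(n-1)}]=[X]^{(n)}$; moreover $t$ is a continuous finite-variation process, so $[t,\cdot]=0$ and adjoining it produces nothing new. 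Using Remark~\ref{variation_properties}(c) together with $\Delta[X]^{(i)}_s=(\Delta X_s)^i$ (the drift and Brownian parts carrying no jumps), every mixed bracket reduces to $[[X]^{(i)},[X]^{(j)}]_t=\sum_s(\Delta X_s)^{i+j}=[X]^{(i+j)}_t$ for $i+j\ge 3$. Hence the only processes ever constructed are the power brackets $[X]^{(n)}$ (together with $t$), and these are the only candidates to be tested.

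Condition (A) is built into the choice of the generating set and requires little: $X$ is stochastic, hence nonzero and not removable, and $t$ is adjoined only in the discontinuous case, precisely when it is not already generated by $X$ through $[X]^{(2)}$ (whose jump contribution $\sum_s(\Delta X_s)^2$ obstructs isolating the pure drift $t$). Thus $\{X\}$, respectively $\{t,X\}$, is a minimal generator of $\mathcal{A}$, and all the genuine content of minimality is carried by condition (B).

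Condition (B) is the substantive part, and here Theorem~\ref{th:minimality} is exactly what is needed. Suppose some power bracket $[X]^{(n)}$ lies in the linear span of repeated integrals of $\{t,[X]^{(1)},\ldots,[X]^{(n-1)}\}$. I would first bring a general element of this span into the normal form of \eqref{eq:representation}: grouping the iterated integrals according to their outermost integrator collects all terms ending in $\mathrm{d}[X]^{(k)}$ into $\int\varphi^k_s\,\mathrm{d}[X]^{(k)}_s$ and those ending in $\mathrm{d}s$ into $\int\varphi^0_s\,\mathrm{d}s$, where each integrand $\varphi^k$ is itself a lower-order repeated integral, hence left-continuous and adapted (the order-zero terms $[X]^{(k)}$ and $t$ appearing as the constant-integrand case). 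Theorem~\ref{th:minimality} then forces every $\varphi^k$ to be constant, so that $[X]^{(n)}=\sum_{k=1}^{n-1}c_k\,[X]^{(k)}+c_0\,t$ is a genuine linear combination of $t$ and the earlier brackets. This is precisely what (B) permits, and it rules out the forbidden ``word'' case of Remark~\ref{interpretation_minimal}: a covariation process is never a nontrivial multiple integral of the previously constructed data. With (A) and (B) in hand, $\{X\}$ (resp.\ $\{t,X\}$) is minimal.

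I expect the main obstacle to be the normal-form reduction of the preceding paragraph --- verifying that the passage from ``the linear span of repeated integrals'' to the single-outer-integration representation \eqref{eq:representation} both preserves left-continuity of the integrands and incurs no loss of generality --- since once this bookkeeping is in place the conclusion is a direct application of Theorem~\ref{th:minimality}, and the jump computation reducing mixed brackets to power brackets is routine.
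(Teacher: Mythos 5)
Your proposal is correct and follows essentially the same route as the paper: the paper derives the corollary directly from Theorem~\ref{th:minimality} (condition (A) being immediate for a single stochastic process, and condition (B) being exactly what the theorem's constancy conclusion delivers), which is precisely your argument. The only material you add is the explicit bookkeeping that reduces a general element of the span of repeated integrals to the normal form \eqref{eq:representation} with left-continuous integrands, a step the paper leaves implicit but which you carry out correctly.
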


\begin{remark}
For $n\ge 2$ the power bracket $[X]^{(n)}$ has finite 
variation. Hence,
any integral representation of $[X]^{(n)}$ cannot include an integral 
with respect to 
the Wiener process $W$. Hence if $\sigma \not=0$, then the 
integrand $\varphi^1$ in  
\eqref{eq:representation} must be zero.
\end{remark}

Suppose for the moment that  $d=2$, and let $X$ and $Z$ be 
independent L{\'e}vy processes 
 with L{\'e}vy decompositions
$X_t  =  \alpha t +\sigma W_t +J_t$ and
$Z_t =  at + c B_t + L_t$, 
where $B$ and $W$ are independent Wiener processes and $J$ and $L$ are 
independent purely discontinuous martingales. 
In particular, the power brackets $[X]^{(n)}$ and $[Z]^{(m)}$ are independent 
for all $n, \, m$.
Hence, any representation
of the power bracket $[X]^{(n)}$ as a sum
of stochastic  integrals cannot have non-zero contributions 
from the process $Z$ and its  power 
brackets as long as these are stochastic, it can only have contributions  
through the deterministic process $t$, if any. 
Hence ${\mathcal X}=\{X, Z\}$ is a minimal family.
This property extends straightforwardly to $d$ independent 
L{\'e}vy processes. In conclusion, we have established the following.

\begin{theorem}\label{th:main_Levy}
Let $X^1, \ldots ,\,X^d$ be independent non-deterministic 
L{\'e}vy processes  with moments
of all orders. 
Then $\{X^1,\, \ldots , \, X^d\}$ is a minimal family
of semimartingales. If none of  the $X^i$, $i=1,\, \ldots,\, d$, 
is continuous, 
then $\{t, X^1, \ldots , \, X^d\}$ is also a minimal family. 
\end{theorem}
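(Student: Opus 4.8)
The plan is to verify the two conditions in the definition of a minimal family, using independence to reduce the whole problem to the single-process statement of Theorem~\ref{th:minimality}. Condition (A) is immediate: since the $X^i$ are independent and non-deterministic, no one of them lies in the algebra generated by the others, so $\{X^1,\ldots,X^d\}$ is the minimal generator of the repeated-integral algebra $\mathcal A$ it produces together with its nested covariation processes. The substantive work is condition (B).

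First I would record that for independent L\'evy processes all \emph{cross} covariations vanish: for $i\neq j$ one has $[X^i,X^j]=0$, because independent L\'evy processes almost surely never jump simultaneously (so $\sum_s\Delta X^i_s\Delta X^j_s\equiv 0$) and their continuous martingale parts are independent Wiener processes with zero covariation. The same reasoning gives $[X^i,[X^j]^{(k)}]=0$ for $i\neq j$. Hence every nontrivial nested covariation process is a power bracket $[X^i]^{(n)}$ of a single process (together with the deterministic $t$, generated via $[X^i]^{(2)}=\sigma_i^2 t$ when some $X^i$ is continuous), and it suffices to analyse a representation of one such power bracket.

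The core step is to show that if a power bracket $[X^i]^{(n)}$ equals a combination of repeated integrals of the family and the previously constructed brackets, then it is actually only a linear combination of $\{t,[X^i]^{(1)},\ldots,[X^i]^{(n-1)}\}$. Since a repeated integral is a single stochastic integral whose integrand is a previously formed, left-continuous predictable process, any nesting can be absorbed into the integrands and such a representation can be written at depth one as
\[
[X^i]^{(n)} = \sum_{j=1}^d \sum_{k}\int \varphi^{j,k}_s\,\mathrm{d}[X^j]^{(k)}_s + \int \varphi^0_s\,\mathrm{d}s .
\]
Compensating each power bracket as in Section~\ref{levy_processes} into a Teugels martingale $Y^{(j,k)}$ plus a multiple of $t$, the martingale part of the left-hand side is a stochastic integral driven by $X^i$ alone, whereas each cross-term $\int\varphi^{j,k}_s\,\mathrm{d}Y^{(j,k)}_s$ with $j\neq i$ is a local martingale strongly orthogonal to it, since independence of $X^i$ and $X^j$ makes the product of their compensated brackets a local martingale. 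By the uniqueness of the orthogonal martingale decomposition used in Remark~\ref{rm:orthogonality}, every cross-contribution with $j\neq i$ must vanish, leaving a representation of $[X^i]^{(n)}$ purely in terms of $X^i$'s own power brackets and $\mathrm{d}s$. Theorem~\ref{th:minimality} then forces the surviving integrands to be constant, which is exactly condition (B); together with (A) this proves $\{X^1,\ldots,X^d\}$ minimal. For the augmented family when no $X^i$ is continuous, adjoining $t$ preserves (A), because $t$ is then genuinely not generated by the stochastic processes, and preserves (B), because $t$ has vanishing covariation with every semimartingale and, by the same orthogonality argument, a deterministic process cannot equal a repeated integral of the stochastic generators with nonconstant integrands.

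The hard part will be this decoupling across independent processes. The integrands $\varphi^{j,k}$ may depend on \emph{all} of $X^1,\ldots,X^d$, so one cannot remove the cross-terms by a naive projection; the argument must run entirely through strong orthogonality of the independent martingale parts together with the Galtchouk--Kunita--Watanabe uniqueness, with the shared deterministic generator $t$ treated as the only admissible cross-process contribution.
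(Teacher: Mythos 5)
Your proposal is correct and follows essentially the same route as the paper: condition (A) from independence, reduction of all nested covariations to single-process power brackets via vanishing cross-brackets, and then an appeal to Theorem~\ref{th:minimality} to force constant integrands, which is exactly condition (B). The only difference is one of rigor, in your favour: where the paper disposes of cross-process contributions with a one-line assertion that independence of the power brackets of $X$ and $Z$ rules out stochastic contributions from the other process, you supply the actual mechanism (strong orthogonality of the independent martingale parts plus uniqueness of the Galtchouk--Kunita--Watanabe decomposition from Remark~\ref{rm:orthogonality}), correctly identifying this decoupling as the step the paper treats informally.
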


The following lemma completely
characterises L{\'e}vy processes with
  power brackets that are linear combinations of lower order power 
brackets as 
compound Poisson processes which assume
a finite number of  values only 
 (with or without  continuous component $\alpha t+\sigma W_t$). 
As an important consequence we have that  
the alphabet underlying the quasi-shuffle
algebra in Section~\ref{quasi_shuffle}
is finite  if and only if the purely discontinuous
martingale part of each $X^i$, $i=1,\ldots, \, d$, is a 
linear combination of (standard) compensated Poisson processes.

\begin{proposition} \label{compound_Poisson}
Let  $X_t=\alpha t +\sigma W_t +J_t$.
 Let $n\ge 1$. 
If the $n$-power bracket $[X]^{(n)}$  is
in the linear span of $\big\{t,\, [X]^{(1)},\,\ldots ,\,  [X]^{(n-1)}\big\}$, then
the L{\'e}vy measure $\nu$ of $X$ has finite support of at most
$n-1$ points. 
In other words, there exist an integer  $k\le n-1$ and
constants $a_1,\, \ldots ,\, a_k$ such that 
\begin{eqnarray}\label{eq:jump_diffusion}
X_t & =\alpha t + \sigma W_t +  \sum_{i=1}^{P_t} Y_i-\alpha_1  t, 
\end{eqnarray}
where $P$ is a standard Poisson process, the $Y_i,\, i\in\mathbb N$,  
are independent identically distributed
random variables, independent of $P$, 
with values in $\{a_1,\, \ldots , a_{k}\}$, and
$\alpha_1:=\int_{\mathbb R} x\, \nu(\mathrm{d}x)$.
Conversely, if $X$ has the form given in equation \eqref{eq:jump_diffusion}, 
then $[X]^{(n)}$  is
in the linear span of $\big\{t,\, [X]^{(2)},\,\ldots, \,\  [X]^{(k+1)}\big\}$ 
for all $n\ge k+2$. 
\end{proposition}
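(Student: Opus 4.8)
I want to prove Proposition~\ref{compound_Poisson}, which characterizes when a power bracket collapses into the linear span of lower-order brackets: precisely when the jump part is a compound Poisson process taking finitely many jump values. Let me think about the two directions separately.

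For the forward direction, the hypothesis is that $[X]^{(n)}$ lies in the linear span of $\{t, [X]^{(1)}, \ldots, [X]^{(n-1)}\}$ — i.e. with \emph{constant} coefficients. I would exploit the explicit formula for the power bracket from the excerpt, namely $[X]^{(n)}_t = \sigma^2 1_{\{n=2\}}t + \int_0^t\int_{\mathbb R} x^n\,Q(\mathrm{d}s,\mathrm{d}x)$. A linear relation $[X]^{(n)} = \sum_{j=1}^{n-1} c_j [X]^{(j)} + c_0 t$ forces, upon matching the martingale (purely discontinuous) parts and reading off the integrands against the Poisson random measure $Q$, the pointwise identity $x^n = \sum_{j=1}^{n-1} c_j x^j$ for $\nu$-almost every $x$. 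This is the crux: the relation among brackets translates into a polynomial identity that must hold on the support of $\nu$.

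The key obstacle — and the heart of the argument — is turning that $\nu$-a.e.\ identity into a genuine finiteness statement about $\mathrm{supp}(\nu)$. The polynomial $p(x) = x^n - \sum_{j=1}^{n-1} c_j x^j = x\bigl(x^{n-1} - \sum_{j=1}^{n-1} c_j x^{j-1}\bigr)$ has degree $n$ but vanishes at $x=0$, so on $\mathbb R\setminus\{0\}$ it can have at most $n-1$ nonzero roots. Since $\nu(\{0\})=0$, the measure $\nu$ must be supported on this finite set of at most $n-1$ points. I would need to be careful to match the martingale parts cleanly — here I can invoke Theorem~\ref{th:minimality}, which already establishes that any left-continuous integrands in such a representation are necessarily constant, so the linear-span hypothesis genuinely yields constant $c_j$ and the polynomial identity is legitimate. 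Once $\nu = \sum_{i=1}^{k} \lambda_i \delta_{a_i}$ with $k\le n-1$, the representation~\eqref{eq:jump_diffusion} follows: the compound Poisson structure with i.i.d.\ jumps valued in $\{a_1,\ldots,a_k\}$ is exactly the decomposition of $J$ in terms of a standard Poisson process $P$ of rate $\sum_i\lambda_i$, compensated by $\alpha_1 t$.

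For the converse, I assume $X$ has the form~\eqref{eq:jump_diffusion} with finitely supported $\nu$ on $\{a_1,\ldots,a_k\}$. Then for every $m\ge 2$ the power bracket is $[X]^{(m)}_t = \sigma^2 1_{\{m=2\}}t + \int_0^t\int_{\mathbb R} x^m\,Q(\mathrm{d}s,\mathrm{d}x)$, and since $x$ ranges only over the finite set $\{a_1,\ldots,a_k\}$, the functions $x\mapsto x^m$ for $m=2,\ldots,k+1$ span the same $k$-dimensional space (a Vandermonde argument on the distinct nonzero values $a_i$) as any higher power $x\mapsto x^n$ for $n\ge k+2$. Reading this spanning relation back through the measure $Q$ expresses $[X]^{(n)}$ as a constant-coefficient combination of $t, [X]^{(2)},\ldots,[X]^{(k+1)}$, which is the claimed conclusion. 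The only bookkeeping point is handling the $\sigma^2 t$ term in the $m=2$ bracket, which is absorbed into the $t$ coefficient. I expect the Vandermonde/linear-algebra step to be routine once the integral representation is in place; the genuinely delicate part remains the forward passage from a relation among stochastic processes to a deterministic polynomial identity on $\mathrm{supp}(\nu)$.
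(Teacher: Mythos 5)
Your proposal is correct and takes essentially the same approach as the paper: a constant-coefficient relation among the brackets forces the polynomial identity $x^n=\sum_{j=1}^{n-1}c_jx^j$ on the jump sizes, whose at most $n-1$ nonzero roots confine the support of $\nu$, and the converse expresses $x^n$ for $n\ge k+2$ through $x^2,\ldots,x^{k+1}$ on the $k$-point support. The differences are only in the plumbing -- the paper extracts the identity pathwise from $\Delta[X]^{(n)}=(\Delta X)^n$ (since $t$ is continuous, jumps on both sides must agree), which is more elementary than your route through the L\'evy--It\^o representation and the $L^2$ isometry of compensated Poisson integrals, and its converse uses an explicit polynomial recursion rather than a Vandermonde argument; note also that your appeal to Theorem~\ref{th:minimality} is superfluous, since membership in the linear span already means constant coefficients by definition.
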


\begin{proof}  First assume there are constants 
 $c_0, \, c_1,\, \ldots, c_{n-1}$ such that
$[X]^{(n)}_t= c_0 t +\sum_{k=1}^{n-1}c_k [X]^{(k)}$.
It follows that $\big(\Delta X\big)^{n}= 
\sum_{k=1}^{n-1}c_k \big(\Delta X\big)^{k}$. 
Hence the jumps of $X$ satisfy 
$\big(\Delta X\big)^{n-1}- \sum_{k=1}^{n-1}c_k \big(\Delta X\big)^{k-1}\equiv 0$. 
There are at most
$n-1$ distinct 
real roots to this equation, say $a_1,\, \ldots , a_{k}$ with $k\le n-1$.
Thus the support of the L{\'e}vy measure $\nu$ of $X$ is the set
$\{a_1,\, \ldots , a_{k}\}$. 
The L{\'e}vy decomposition theorem (Protter, Theorem IV.42)
implies the jump component $J$ of $X$ is a compensated compound 
Poisson process 
$J_t=\sum_{i=0}^{P_t} Y_i-\alpha_1  t$, where $Y_i$, 
$i\in\mathbb N$, is a sequence of independent 
random variables with values in $\{a_1,\, \ldots , a_{k}\}$.  

Second, for the converse result, assume that $X$ has the form stated.
For $n\ge 2$ the $n$-power bracket is given by
$[X]^{(n)}_t= \sum_{i=1}^{P_t} Y_i^n+\sigma^21_{\{n=2\}}t$.
Let $c_1, \,\ldots , \, c_{k+1}$ be constants such that  
$a_1,\, \ldots , a_{k}$ are the real roots of the polynomial
$c_1+ c_2 x + \ldots + c_{k+1}x^k$. Then 
$c_{k+1}Y_i^k=- c_1- c_2 Y_i^1-  \ldots - c_{k}Y_i^{k-1} $, and hence 
$c_{k+1}Y_i^{k+2}=- c_1Y_i^2- c_2 Y_i^3-  \ldots - c_{k}Y_i^{k+1}$.
It follows that 
$c_{k+1}[X]^{(k+2)}_t   =\, c_{k+1}\sum_{i=1}^{P_t} Y_i^{k+2} 
 = \, - \sum_{j=2}^{k+1}c_{j-1}\sum_{i=1}^{P_t}Y_i^j
 = \, - \sum_{j=2}^{k+1}c_{j-1} [X]^{(j)}_t + c_1 \sigma^2 t$.
Hence $[X]^{(k+2)}$ 
and consequentially all higher power brackets
$[X]^{(n)}$ for $n\ge k+2$  are linear combinations of 
$t, [X]^{(2)},\, \ldots ,\, [X]^{(k+1)}$.
\end{proof}

\begin{remark}
Since compound Poisson processes that assume finitely many values can be
expressed as linear combinations of independent standard Poisson
processes,  
a L{\'e}vy process $X$ of the form in
equation \eqref{eq:jump_diffusion} is given equivalently  by
$X_t=\alpha t + \sigma W_t + a_1 \bar{P}^1_t + \ldots + a_k \bar{P}^k(t)$,
where $\bar{P}^i$
are independent compensated Poisson processes,  $i=1, \ldots , k$.
\end{remark}

\section{Construction of a quasi-shuffle algebra}\label{quasi_shuffle}
We recall the definition of a quasi-shuffle algebra following
the exposition in Hoffman and Ihara (2012).

Let $\Ab$ be a countable alphabet.
Let $\Rb\Ab$ denote the vector space with $\Ab$ as basis.
We suppose there is commutative associative product
$[\,\cdot\,,\,\cdot\,]$ on $\Rb\Ab$.
Let $\Rb\la\Ab\ra$ denote the algebra of noncommutative polynomials and
formal power series over the field of real numbers $\Rb$ generated by 
monomials (or words) $w=a_1a_2\cdots a_n$ with $a_i\in\Ab$. 

\begin{definition} {\bf (Quasi-shuffle product)}\label{def:quasi_shuffle}
The \emph{quasi-shuffle product} $\ast$
is defined recursively on $\Rb\la\Ab\ra$ via
$va\ast wb=(v*wb)a+(va\ast w)b+(v\ast w)[a,b]$,
where  $v$ and $w$ are words and $a$ and $b$ are letters.
\end{definition}

\begin{definition}
For a minimal family of semimartingales 
${\mathcal X}=\{X^1, \ldots ,\, X^d\}$, 
we define the countable alphabet ${\mathbb A}$ inductively as follows:
\begin{enumerate}
\item[(1)] $\Ab$ {\em contains} the letters $1,\, \ldots ,\, d$.
\item[(2)]  
Inductively, for $n\ge 2$ and for 
$k_1 \le k_2 \le \ldots \le k_n\in\{1, \ldots ,\, d\}$  
consider the 
nested quadratic covariation process
$[X^{k_1}, [X^{k_2}, [ \ldots [X^{k_{n-1}}, X^{k_n}]\ldots ]$. 
If 
this process is not in 
the linear span of 
${\mathcal X}$ and 
previously constructed ones, 
then assign it a new letter.
\end{enumerate}
\end{definition}

\begin{remark}
In general the alphabet  ${\mathbb A}$ is  not
 finite. 
\end{remark}

Let $\mu:\Ab \to {\mathcal A}$ denote the map that identifies a letter with
the corresponding semimartingale in ${\mathcal X}$ or one of the 
power bracket processes identified in (2) above. 
Let $\R\la\Ab\ra$ denote the set of all noncommutative polynomials
and formal series on the alphabet $\Ab$ over $\R$. 
 We extend the map $\mu$ defined above as follows:
for a word  
$w=a_1\ldots a_n\in  \R\la\Ab\ra$ with letters
$a_1,\, \ldots ,\, a_n$ we set $\mu(w)=I_w$ where
$I_w$ is the multiple integral
$I_w(t)\equiv\int_0^t\cdots\int_0^{\tau_{n-1}-}
\mathrm{d}I_{a_1}(\tau_n)\,\cdots\,\mathrm{d}I_{a_n}(\tau_1)$
with 
$I_{a_i}=\mu(a_i)$, $i=1,\ldots , \, n$,
 {\it i.e.}~if $a\in\{1,\ldots ,d\}$ then $I_a=X^a$ or if 
 $a$ is from (2) then 
$I_a$ is a nested quadratic covariation process. 
 We extend this to 
$\R\la\Ab\ra$  linearly. We can now pullback the 
multiplication of multiple integrals to define a product on $\R\la\Ab\ra$ 
as follows.
For words $v$ and $w$ and letters $a$ and $b$, the product of multiple
integrals satisfies 
$I_{va} I_{wb} (t) 
  =  \,  \int_0^t I_v(\tau-) \int_0^{\tau-} \!I_w(s-)\, \mathrm{d} I_b(s)
\, \mathrm{d}I_a(\tau) 
+ \int_0^t \int_0^{\tau-} \! I_v(s-)\, \mathrm{d} I_a(s)\, 
\cdot I_w(\tau-)  \,\mathrm{d}I_b(\tau)
+ \int_0^t  I_v(\tau-) I_w(\tau-)\, \mathrm{d} [I_a,I_b](\tau)$.
Recall that the bracket process defines a commutative, associative product
on the real vector space generated by ${\mathbb A}$ 
(see Remark~\ref{variation_properties}).
Thus the pullback under $\mu$ defines a 
quasi-shuffle  
product $\ast$  
on $\R\la\Ab\ra$ 
where  we set
$[a,b]  := \, \mu^{-1}\big([\mu(a), \mu(b)]\big)$.
We summarize our findings in the following key theorem. 

\begin{theorem}
The map $\mu$ is an algebra 
isomorphism between the quasi-shuffle algebra
$\big(\R\la\Ab\ra,\,\ast)$ and the algebra generated 
by the minimal family $\{X^1,\, \ldots ,\, X^d\}$.\\
\end{theorem}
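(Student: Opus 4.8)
The plan is to verify the three defining features of an algebra isomorphism in turn: that $\mu$ is a well-defined algebra homomorphism carrying $\ast$ to the semimartingale product, that it is surjective, and that it is injective. Before anything else I would check that the quasi-shuffle bracket on letters, $[a,b]:=\mu^{-1}\bigl([\mu(a),\mu(b)]\bigr)$, is genuinely well defined. Two ingredients make this work. First, $\mu$ restricted to the span $\Rb\Ab$ of letters is injective: by construction of the alphabet and the minimal-generator property~(A), a nested covariation process is assigned a \emph{new} letter only when it fails to lie in the linear span of the letters already introduced, so the letters map to a linearly independent family and $\mu^{-1}$ makes sense on $\mu(\Rb\Ab)$. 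Second, property~(B) guarantees that $[\mu(a),\mu(b)]$, being a nested covariation process, lies in the linear span of the letters and \emph{not} in the span of proper multiple integrals; hence $[\mu(a),\mu(b)]\in\mu(\Rb\Ab)$ and $[a,b]\in\Rb\Ab$. Commutativity and associativity of this bracket, needed for $\ast$ to be a bona fide quasi-shuffle product, are inherited from the corresponding properties of the quadratic covariation recorded in Remark~\ref{variation_properties}.

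The homomorphism property $\mu(u\ast v)=\mu(u)\,\mu(v)$ I would establish by induction on the total length $|u|+|v|$, reducing by bilinearity to words $u=pa$ and $v=qb$. Applying $\mu$ to the quasi-shuffle recursion $pa\ast qb=(p\ast qb)a+(pa\ast q)b+(p\ast q)[a,b]$, and using that appending a letter $c$ corresponds to integrating against $\mathrm{d}I_c$, the three terms become $\int I_p(\tau-)I_{qb}(\tau-)\,\mathrm{d}I_a$, $\int I_{pa}(\tau-)I_q(\tau-)\,\mathrm{d}I_b$ and $\int I_p(\tau-)I_q(\tau-)\,\mathrm{d}[I_a,I_b]$, where the inductive hypothesis replaces $\mu(p\ast qb)$ by $I_pI_{qb}$ and so on, and $\mu([a,b])=[I_a,I_b]$ by definition. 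Their sum is exactly the product-of-integrals formula displayed just before the theorem, that is $I_{pa}I_{qb}=\mu(pa)\,\mu(qb)$; the base case $\mu(a\ast b)=I_aI_b$ is the integration-by-parts identity~\eqref{eq:covariation}. Surjectivity is then almost immediate: the algebra generated by $\{X^1,\dots,X^d\}$ coincides with the linear span of the multiple integrals $I_w$, because the product formula rewrites any product of multiple integrals as a linear combination of multiple integrals (the covariation of two letters lying in the span of letters, by~(B)), and every such $I_w$ equals $\mu(w)$.

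The crux is injectivity, i.e.\ the linear independence of the multiple integrals $\{I_w\}$ in $\mathcal A$, and I expect this to be the main obstacle; it reduces, degree by degree, to polynomial relations. I would argue by induction on the maximal word length $N$ in a putative relation $\sum_w c_w I_w=0$. Differentiating and grouping by the final letter gives $\sum_{a\in\Ab}\Psi_a(\cdot-)\,\mathrm{d}I_a=0$, where $\Psi_a=\sum_{w'}c_{w'a}I_{w'}$ is a combination of strictly shorter integrals. Splitting each $I_a$ into its predictable finite-variation part (a multiple of $t$) and its locally square-integrable martingale part, and replacing the martingale parts by a strongly orthogonal Galtchouk--Kunita--Watanabe basis as in Remark~\ref{rm:orthogonality}, the relation takes the form $\sum_k\widetilde\Psi_k(\cdot-)\,\mathrm{d}H^{(k)}+\widetilde\Psi_0(\cdot-)\,\mathrm{d}t=0$. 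Uniqueness of the finite-variation/local-martingale decomposition, together with the GKW uniqueness identity quoted in Remark~\ref{rm:orthogonality}, forces every $\widetilde\Psi_k$ and $\widetilde\Psi_0$ to vanish; minimality—the letters forming a minimal generating family with property~(B) ensuring the covariation letters are genuinely new—makes the coordinate change from the $\Psi_a$ to the $\widetilde\Psi_k,\widetilde\Psi_0$ invertible, so each $\Psi_a\equiv0$. By the inductive hypothesis applied to these shorter combinations, all top-length coefficients $c_{w'a}$ vanish, and the residual lower-length relation is closed by induction.

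The delicate points here will be organising this reduction uniformly over all letters at once, rather than for a single power bracket as in Theorem~\ref{th:minimality}, and justifying the invertibility of the coordinate change purely from conditions~(A) and~(B). For the concrete and central case of independent L\'evy processes, both are supplied by Theorem~\ref{th:main_Levy} and the explicit strong orthogonalization of the Teugels martingales, which is precisely why the L\'evy analysis of Section~\ref{levy_processes} is done before the abstract isomorphism is asserted.
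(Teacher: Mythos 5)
Your first two stages --- well-definedness of the bracket $[a,b]:=\mu^{-1}\bigl([\mu(a),\mu(b)]\bigr)$ from properties (A) and (B), the homomorphism identity $\mu(u\ast v)=\mu(u)\,\mu(v)$ via the three-term product formula for $I_{va}I_{wb}$, and surjectivity --- are exactly the paper's own argument: the theorem is stated there as a summary of precisely this construction, with Remark~\ref{variation_properties} supplying commutativity and associativity of the bracket, and minimality supplying that $[\mu(a),\mu(b)]$ is a linear combination of letters rather than a genuine multiple integral. Up to that point you and the paper coincide.

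Where you part company is injectivity. The paper never proves linear independence of the multiple integrals $\{I_w\}$; it is absorbed silently into the phrase ``pullback the multiplication'' (a pullback along $\mu$ presupposes $\mu$ injective), with minimality as the implicit justification. You attempt a genuine proof, and your outline is the natural one: induct on word length, strip the last letter, and use uniqueness of the local-martingale/finite-variation decomposition together with a Galtchouk--Kunita--Watanabe argument. But as written it has a real gap at the level of generality the theorem claims. Remark~\ref{rm:orthogonality}, the strongly orthogonal basis $H^{(k)}$, the fact that the predictable finite-variation parts are scalar multiples of $t$, and the GKW uniqueness statement are all established in Section~\ref{levy_processes} for L\'evy processes only; a general minimal family of semimartingales need not consist of special, let alone locally square-integrable, semimartingales, so neither the decomposition you invoke nor the orthogonalization need exist. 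Moreover, your claim that conditions (A) and (B) make the ``coordinate change'' from the $\Psi_a$ to the $\widetilde\Psi_k,\widetilde\Psi_0$ invertible is not justified: property (B) constrains only nested covariation processes, not arbitrary linear relations among multiple integrals. You flag this delicacy yourself, which is to your credit, but it means your argument establishes injectivity only in the L\'evy setting (where Theorem~\ref{th:minimality} and the Teugels orthogonalization do the work), not for the theorem as stated. In fairness, the paper is no better off here: it asserts rather than proves this point, so your proposal is, if anything, more complete than the text it is measured against --- it simply cannot close the one step the paper also leaves open.
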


We now give  $\big(\R\la\Ab\ra,\,\ast)$ an additional structure
via a grading.

\begin{definition} {\bf (Grading)}
On the alphabet ${\mathbb A}$ we define a grading $g$ as follows:
For $a\in\{1, \ldots ,\, d\}$, we only require $g(a) \in \mathbb N$, 
e.g. $g(a)=1$.
For a letter $a \in {\mathbb A}$ with $\mu(a)=[X^i, X^j]$, set
$g(a)=g(i)+g(j)$. More generally,  for a letter $a \in {\mathbb A}$
that is mapped under $\mu$ to a
 nested quadratic covariation process, set $g(a)$ to be the
sum of the gradings of each of its components.  
\end{definition}

\begin{remark}
For each $n\ge 1$ there are only finitely many letters of grade $n$.
Thus ${\mathbb A}$ equipped with the grading $g$ is locally finite. 
 We extend the 
grading $g$ to words $w=a_1\ldots a_n$
by setting 
$g(w)=g(a_1)+\ldots \, + g(a_n)$.
Then 
  $\big(\R\la\Ab\ra,\,\ast)$ equipped 
with the grading $g$
 is a filtered quasi-shuffle algebra, {\it i.e.}~the 
quasi-shuffle product of any two words 
$v$ and $w$
is a linear combination of words with degree $g(v)+g(w)$ or less
(Lang 2002; p.~172). 
 An algebra 
equipped 
with a grading $g$ is a {\em graded algebra}, if
for any two words $v$ and $w$, the product $v\ast w$ is a linear combination
of words with degree $g(v)+g(w)$ (Lang 2002; p.~172). 
Hence 
$(\,\R\la\Ab\ra,\,\ast)$
is a graded algebra if and only if 
for any two letters $a, \, b\in\Ab$ with $[a, b]\not= 0$
the grading $g$ satisfies $g([a,b])= g(a)+g(b)$. 
\end{remark}

\begin{corollary} {\bf (L{\'e}vy Processes)}
Suppose that  $\{X^1, \ldots ,\, X^d\}$ are
 independent (non-deterministic) L{\'e}vy processes that have finite moments.
Then:
\begin{enumerate}
\item[(a)]
The algebra generated by the minimal family 
${\mathcal X}=\{X^1, \ldots ,\, X^d\}$
is isomorphic to the quasi-shuffle algebra  
$\R\la\Ab\ra$, where the alphabet $\Ab$ is defined via (1)
and (2) above. 
If none of the $X^i$ is continuous, then this  holds true
for the minimal family 
${\mathcal X}=\{X^0, X^1, \ldots ,\, X^d\}$
 with $X^0_t:= t$. 
\item[(b)] The alphabet $\Ab$ in (a) is finite, 
if and only if for each $i=1,\ldots ,\, d$,
the purely discontinuous martingale part  of 
$X^{(i)}$ is either identically zero or 
a linear combination of independent standard compensated
Poisson processes. 
\item[(c)] Suppose that the grading $g$  
is specified  on $\{0, 1,\ldots ,\, d\}$ as
$g(0) =  2$ and $g(i) = 1$  for  $i=1,\ldots ,\,d$.
Then the 
algebra $\big(\R\la\Ab\ra, \ast\big)$ 
equipped with the grading $g$ 
is a graded algebra unless 
the  purely discontinuous martingale part of one or more
of the $X^{(i)}$, $i=1, \ldots, d$, 
is a linear combination of independent standard compensated Poisson processes. 
\end{enumerate}
\end{corollary}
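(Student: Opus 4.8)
I would establish the three parts in turn, each resting on a different earlier result. Part (a) requires no new work: Theorem~\ref{th:main_Levy} already shows that a finite family of independent non-deterministic L{\'e}vy processes with finite moments is a minimal family (and that adjoining $X^0_t:=t$ keeps it minimal when no $X^i$ is continuous), and the isomorphism theorem of Section~\ref{quasi_shuffle} then applies verbatim to this minimal family, yielding the algebra isomorphism $\mu:(\R\la\Ab\ra,\ast)\to\mathcal{A}$. So I would simply quote these two statements.

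For part (b), the essential observation is that independence decouples the alphabet across the individual processes. Independent L{\'e}vy processes almost surely share no jump times and their continuous martingale parts are independent Wiener processes, so $[X^i,X^j]=0$ for $i\neq j$ and $[t,\,\cdot\,]=0$; hence every non-trivial nested covariation reduces to a power bracket $[X^i]^{(n)}$ of a single process, and $\Ab$ is the disjoint union of the letters contributed by each $X^i$ together with $t$. It therefore suffices, for each fixed $i$, to decide when the power brackets $[X^i]^{(2)},[X^i]^{(3)},\ldots$ yield only finitely many new letters. This happens exactly when some $[X^i]^{(n)}$ falls into the linear span of $\{t,[X^i]^{(1)},\ldots,[X^i]^{(n-1)}\}$, for then by Lemma~\ref{linear_span} (the implication (c)$\Rightarrow$(d)) all higher power brackets remain in the same finite-dimensional span and no further letters arise. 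By Proposition~\ref{compound_Poisson} such a collapse is equivalent to the L{\'e}vy measure of $X^i$ having finite support, i.e. to the jump part of $X^i$ being compound Poisson with finitely many values, and by the remark following that proposition this is precisely the statement that the purely discontinuous martingale part of $X^i$ is a finite linear combination of independent standard compensated Poisson processes; the degenerate continuous case is covered too, since then $[X^i]^{(2)}=\sigma_i^2 t$ and $[X^i]^{(n)}=0$ for $n\ge 3$ add no letters. Conjoining over $i=1,\ldots,d$ gives the stated characterisation.

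For part (c), I would use the criterion recorded in the remark just before the corollary: $(\R\la\Ab\ra,\ast)$ is graded if and only if $g([a,b])=g(a)+g(b)$ for every pair of letters with $[a,b]\neq 0$. By the decoupling of part (b) the only brackets to examine are the single-process ones, and a direct computation from Remark~\ref{variation_properties}(c) and the definition of the power bracket gives $[[X^i]^{(p)},[X^i]^{(q)}]=[X^i]^{(p+q)}$ for all $p,q\ge 1$. If $X^i$ is not compound Poisson with finitely many values then either it has an infinite jump spectrum, so that every power bracket $[X^i]^{(m)}$ is a genuine letter of grade $m$ and $g([X^i]^{(p+q)})=p+q=g([X^i]^{(p)})+g([X^i]^{(q)})$, or it is continuous, so that the only collapse is $[X^i]^{(2)}=\sigma_i^2 t$ with $g(t)=2=g(X^i)+g(X^i)$; in either case the grading condition is met. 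Hence if no $X^i$ is compound Poisson with finitely many values the algebra is graded. Conversely, if the L{\'e}vy measure of some $X^i$ is supported on $k\ge 1$ points, then by Proposition~\ref{compound_Poisson} the letter $[X^i]^{(k+1)}$ survives but $[X^i]^{(k+2)}=[X^i,[X^i]^{(k+1)}]$ collapses into the span of $\{t,[X^i]^{(2)},\ldots,[X^i]^{(k+1)}\}$, whose members carry grades $2,2,3,\ldots,k+1$, none equal to the value $k+2=g(X^i)+g([X^i]^{(k+1)})$ forced by the grading condition; the algebra is then only filtered. This proves the ``unless'' clause.

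The main obstacle is part (c). The delicate point is the bookkeeping around the grading value $g(0)=2$: one must check that it simultaneously absorbs the harmless continuous collapse $[X^i]^{(2)}=\sigma_i^2 t$ (both sides of grade $2$) and still leaves the compound-Poisson collapse at $[X^i]^{(k+2)}$ genuinely graded-inconsistent. Identifying exactly which power brackets persist as letters and matching grades across each collapse relation is where the argument goes beyond a direct appeal to the earlier results.
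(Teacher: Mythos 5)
Your treatment of parts (a) and (b) matches the paper's: (a) is exactly Theorem~\ref{th:main_Levy} combined with the isomorphism theorem of Section~\ref{quasi_shuffle}, and (b) is the paper's appeal to Proposition~\ref{compound_Poisson} (your decoupling-by-independence discussion and your use of Lemma~\ref{linear_span} fill in detail that the paper leaves implicit). Your part (c) also follows the paper's strategy: reduce to single-process power brackets via $[X^i,X^j]=0$, use $\bigl[[X^i]^{(p)},[X^i]^{(q)}\bigr]=[X^i]^{(p+q)}$, and invoke Proposition~\ref{compound_Poisson} to decide when this bracket collapses into lower-grade letters.

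However, the converse half of your part (c) contains a false step. You claim that when the L\'evy measure of $X^i$ is supported on $k$ points, Proposition~\ref{compound_Poisson} guarantees that ``the letter $[X^i]^{(k+1)}$ survives.'' The proposition guarantees no such thing: its first half only shows that $[X^i]^{(n)}$ survives for $n\le k$, and whether $[X^i]^{(k+1)}$ is a genuine letter depends on the Gaussian part. If $\sigma_i=0$, then $[X^i]^{(k+1)}$ already lies in the linear span of $\{t,\, X^i,\, [X^i]^{(2)},\ldots,[X^i]^{(k)}\}$: writing $\prod_{j=1}^k(x-a_j)=x^k+b_{k-1}x^{k-1}+\cdots+b_0$ and multiplying the resulting jump identity by $\Delta X^i$, one obtains
\begin{equation*}
[X^i]^{(k+1)}=-\sum_{j=2}^{k}b_{j-1}[X^i]^{(j)}-b_0 X^i + b_0(\alpha-\alpha_1)\,t,
\end{equation*}
using $\sum_s\Delta X^i_s = X^i_t-\alpha t+\alpha_1 t$ when there is no Wiener component. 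The simplest instance is a compensated Poisson process ($k=1$), where $[X^i]^{(2)}=X^i+t$ is not a new letter, so the pair of letters you designate does not exist. The conclusion is still true and the repair is local: let $m\in\{k,k+1\}$ be the largest index for which $[X^i]^{(m)}$ is a letter; then for the letters $a,b$ with $\mu(a)=X^i$ and $\mu(b)=[X^i]^{(m)}$ one has $[a,b]\neq 0$, yet $\mu([a,b])=[X^i]^{(m+1)}$ lies in the span of letters of grade at most $m<m+1=g(a)+g(b)$, so the algebra is not graded. This is in effect how the paper argues: its case analysis never commits to the specific level at which the collapse occurs, only to the dichotomy ``in the span / not in the span,'' which sidesteps the $\sigma_i=0$ issue entirely.
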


\begin{proof}
Result (a) follows from Theorem~\ref{th:main_Levy}  and  
result (b) from Proposition~\ref{compound_Poisson}.  
For result (c), we have to show that 
for any letters $a, \, b\in\Ab$ with $[a, b]\not= 0$
the grading $g$ satisfies $g([a,b])= g(a)+g(b)$. Since
$[X^i, X^j]\equiv 0$ for $i\not= j$, we can assume that
$a$ and $b$ have  corresponding semimartingales
$\mu(a)=[X^i]^{(n)}$ and $\mu(b)=[X^i]^{(m)}$. 
We have by definition 
$\mu([a, b]) =  \big[[X^i]^{(n)}, [X^i]^{(m)}\big]
= \, [X^i]^{(n+m)}$.
If $X^i$ is continuous, say $X^i_t=\alpha t +\sigma W_t$, then
$[X^i]^{(n+m)}=\sigma^21_{\{n=m=1\}} \cdot t$. Hence if $\sigma\not= 0$ then
$g([a,b])=2$ for $n=m=1$, and $[a,b]$ is zero otherwise. Hence result
(c) 
follows for continuous $X^i$.
Suppose now that $X^i$ is not continuous.
By definition of $g$ we have
$g([a, b])= (n+m)g(i)=g(a)+g(b)$, 
if  $[X^i]^{(n+m)}$ is not in the linear span generated by $t$ and 
$[X^i]^{(k)}$ with $k\le n+m-1$.
On the other hand if  $[X^i]^{(n+m)}$ is in the linear span
generated by $t$ and 
$[X^i]^{(k)}$ with $k\le n+m-1$,  then $[a, b]$ is a linear combination
 of letters with degree of at most  $(n+m-1) g(i)$. 
By Proposition~\ref{compound_Poisson} the power bracket
$[X^i]^{(n+m)}$ is in the linear span generated by $t$ and lower order power
brackets if and only if 
 the purely discontinuous martingale part of one or more
of the $X^{(i)}$, $i=1, \ldots, d$, 
is a linear combination of independent standard compensated Poisson processes.
Hence assertion (c) follows.
\end{proof}

\section{Conclusions and further work}\label{conclusion}
The main results we have proved, that minimal families of 
semimartingales form a quasi-shuffle algebra and 
a family of independent L\'evy processes generate
such a minimal family, are important in their own right. 
However, there are important further implications and applications 
we intend to pursue, see Curry \emph{et al.\/} (2013). 
First the Hoffman exponential map
gives an isomorphism between the shuffle and quasi-shuffle algebras.
This simplifies the algebra and analysis and raises a natural question. 
Would the corresponding shuffle algebra be based on the Marcus integral 
(see Marcus 1981 or Applebaum 2009)? 
Second, with deconcatenation as a coproduct and a natural
antipode established therefrom, the quasi-shuffle algebra becomes 
a \emph{Hopf algebra}. 
Thus in principle we can establish the quasi-shuffle convolution
algebra of endomorphisms on the quasi-shuffle Hopf algebra. 
See Reutenauer (1993, p.~58) for the shuffle
case and Novelli, Patras and Thibon (2011) for the quasi-shuffle case.
The convolution algebra is a natural setting for designing
numerical methods for stochastic differential equations. See 
Ebrahimi--Fard \emph{et al.\/ } (2012) where efficient numerical
methods for stochastic differential equations driven by Wiener 
processes are constructed utilizing the convolution shuffle algebra. 
Hence our next goal is to construct efficient numerical methods
for stochastic differential equations driven by L\'evy processes. 


\end{document}